\newcommand{\bbZ}{\mathbb{Z}}
\newcommand{\bbR}{\mathbb{R}}
\newcommand{\bbC}{\mathbb{C}}
\newcommand{\bbH}{\mathbb{H}}
\newcommand{\bbF}{\mathbb{F}}
\newcommand{\frg}{\mathfrak{g}}
\newcommand{\frh}{\mathfrak{h}}
\newcommand{\frf}{\mathfrak{f}}
\newcommand{\frb}{\mathfrak{b}}
\newcommand{\frd}{\mathfrak{d}}
\newcommand{\frt}{\mathfrak{t}}
\newcommand{\fru}{\mathfrak{u}}
\newcommand{\frsl}{\mathfrak{sl}}
\newcommand{\frsu}{\mathfrak{su}}
\newcommand{\GL}{GL}
\newcommand{\SL}{SL}
\newcommand{\SO}{SO}
\newcommand{\Sp}{Sp}
\newcommand{\SU}{SU}
\newcommand{\U}{U}
\newcommand{\grT}[1]{T_{#1}}
\newcommand{\ssec}{\Gamma}
\newcommand{\form}{\Omega}
\newcommand{\exter}{\Lambda}
\newcommand{\emrp}{\mathrm{End}}
\newcommand{\lie}{\mathcal{L}}
\newcommand{\dd}{\partial}
\newcommand{\ddb}{\overline\partial}
\newcommand{\ob}{\nabla}
\newcommand{\ii}{\sqrt{-1}}
\newcommand{\EE}{\mathcal{E}}
\newtheorem{defn}{Definition}[section]
\newtheorem{prop}[defn]{Proposition}
\newtheorem{thm}[defn]{Theorem}
\newtheorem{lem}[defn]{Lemma}
{\theoremstyle{remark}
 \newtheorem{rem}[defn]{Remark}

\makeatletter
\@addtoreset{equation}{section}
\makeatother

\title{Holonomy of the Obata connection on $SU(3)$}

\author{Andrey Soldatenkov 
	\thanks{The author is partially supported by
	AG Laboratory HSE, RF government 
	grant, ag. 11.G34.31.0023}}
\date{}

\pagestyle{plain}

\begin{document}

\maketitle

\begin{abstract}
	A hypercomplex structure on a smooth manifold is a triple of integrable
	almost complex structures satisfying quaternionic relations. The Obata
	connection is the unique torsion-free connection that preserves each of the
	complex structures. The holonomy group of the Obata connection
	is contained in $\GL(n, \bbH)$. There is a well-known construction of
	hypercomplex structures on Lie groups due to Joyce. In this paper
	we show that the holonomy of the Obata connection on $\SU(3)$ coincides
	with $\GL(2, \bbH)$.
\end{abstract}

\tableofcontents

\section{Introduction}

Consider a smooth manifold equipped with a triple of almost
complex structures satisfying quaternionic relations.
The manifold is called hypercomplex if these almost complex
structures are integrable. Hypercomplex manifolds were 
defined by Boyer \cite{Bo} and they
were much studied since then. There exist many examples of such manifolds
including hyperk\"ahler manifolds, nilmanifolds, Lie groups
with hypercomplex structures and others. Boyer also 
classified compact hypercomplex manifolds of real dimension four.
Homogeneous hypercomplex structures on Lie groups appeared in the context of 
string theory (see \cite{SSTV}) and then in the work of Joyce \cite{J}.

Each hypercomplex manifold is endowed with a torsion-free connection
preserving all the complex structures which is called the Obata connection.
The holonomy group of this connection is an important characteristic of the
hypercomplex structure. Since the Obata connection preserves the
quaternionic structure, its holonomy is contained in $\GL(n,\bbH)$ which is
one of the groups in the list of possible irreducible holonomies. 

The classification of irreducible holonomy groups of torsion-free
connections has a long history. For locally symmetric connections
the problem essentially reduces to the classification of symmetric
spaces which was known since \'Elie Cartan (see e.g. \cite{Bes}). 
For connections that are not locally symmetric a major breakthrough
was made in 1955 by Berger. He obtained a list
of irreducible metric holonomies (i.e. holonomies of the connections
that preserve some non-degenerate symmetric bilinear form) and a part
of the list of non-metric ones. The classification was completed
in 1999 by Merkulov and Schwachh\"ofer \cite{MS} thus providing
a full list of all possible irreducible holonomy groups.

The subgroups of $\GL(n,\bbH)$ which appear in the list of irreducible
ho\-lo\-no\-mies are $\Sp(n)$ and $\SL(n,\bbH)$. For both of these subgroups,
there exist examples of manifold with holonomy contained in it.
These are hyperk\"ahler manifolds for $\Sp(n)$ and, for example, nilmanifolds
for $\SL(n,\bbH)$ (see e.g. \cite{BDV}).
The group $\GL(n,\bbH)$ appears as a possible local holonomy group (see \cite{MS}),
but it was apparently unknown if it could occur as a holonomy of a compact
hypercomplex manifold.
The purpose of the present paper is to prove that the holonomy of
the Obata connection on $\SU(3)$ is $\GL(2,\bbH)$, thus providing
the first compact example.

In Section~\ref{2} we recall the definition of the hypercomplex
structure and obtain some useful properties of the Obata connection.
In Section~\ref{3} we review the construction of the hypercomplex structures on
Lie groups. In Section~\ref{4} we study the Obata connection on $\SU(3)$ and
prove the main theorem.

\medskip
\noindent
{\bf Acknowledgements.} I would like to express my gratitude to
Misha Verbitsky for suggesting the topic of this work to me, 
for fruitful discussions and constant encouragement.

\section{Hypercomplex manifolds and the Obata connection}\label{2}

In this section we recall the definition of a hypercomplex manifold
and establish some useful properties of the Obata connection.

\subsection{Hypercomplex structures}

Let $M$ be a smooth manifold. Recall that an {\it almost complex structure}
on $M$ is an endomorphism $I\colon TM\to TM$ satisfying $I^2 = -Id$.
The {\it Nijenhuis tensor} for $I$ is given by
\begin{equation}\label{eqn_Nijen}
N_I(X,Y) = [X,Y] + I[IX,Y] + I[X,IY] - [IX,IY].
\end{equation}

If the Nijenhuis tensor vanishes, the almost complex structure is called
integrable. It is a well-known result of Newlander and Nirenberg that 
every integrable almost complex structure arises from a complex analytic
structure on $M$.

\begin{defn}\label{defn_Hypercompl}
A hypercomplex structure on a smooth manifold $M$ is a triple of
integrable almost complex structures $I$, $J$, $K$ satisfying $$IJ = -JI = K.$$
\end{defn}

Note that a hypercomplex structure induces a natural action of the
quaternion algebra $\bbH$ on the tangent bundle of $M$. Thus, every
hypercomplex manifold is equipped with a two-dimensional sphere of
complex structures corresponding to imaginary quaternions of unit length.

\subsection{The Obata connection}

Let $(M,I,J,K)$ be a hypercomplex manifold. It has been shown by Obata
\cite{Ob} that $M$ admits a unique torsion-free connection $\ob$ that
preserves the hypercomplex structure, i.e.
$$
\ob I = \ob J = \ob K = 0.
$$
This connection is called {\it the Obata connection}.

Consider the decomposition of the complexified
tangent bundle of $M$ with respect to $I$:
$$
T_{\bbC}M = TM \otimes_{\bbR} \bbC = T^{1,0}_IM \oplus T^{0,1}_IM,
$$
where $T^{1,0}_IM = \{X \in T_{\bbC}M\colon IX = \ii X\}$, \
$T^{0,1}_IM = \{X \in T_{\bbC}M\colon IX = -\ii X\}$.

Since the complex structure $I$ anticommutes with $J$, the latter interchanges
the eigenspaces of $I$:
$$
J\colon T^{1,0}_IM \to T^{0,1}_IM, \qquad J\colon T^{0,1}_IM \to T^{1,0}_IM.
$$

Recall that the bundle $T^{1,0}_IM$ can be endowed with a
holomorphic structure given by an operator
$$
\ddb\colon \ssec(T^{1,0}_IM) \to \form^{0,1}_IM \otimes \ssec(T^{1,0}_IM),
$$
where $\form^{0,1}_IM$ is a space of $(0,1)$-forms with respect to $I$. 
Similarly, $T^{0,1}_IM$ can be endowed with an antiholomorphic structure
$$
\dd\colon \ssec(T^{0,1}_IM) \to \form^{1,0}_IM \otimes \ssec(T^{0,1}_IM).
$$

We will identify the complex bundle $(TM, I)$ with $T^{1,0}_IM$ via the
isomorphism
\begin{equation}\label{eqn_Iso}
X \mapsto \frac12 (X - \ii IX).
\end{equation}
Since $\ob$ preserves $I$, this isomorphism enables us to view 
the Obata connection as a connection on $T^{1,0}_IM$.
Considered from this perspective, $\ob$ admits an especially
simple description.

\begin{prop}\label{prop_ob}
Let $(M,I,J,K)$ be a hypercomplex manifold, $\dim_{\bbR}M = 4n$.
\begin{enumerate}
\item 
	The Obata connection  
	$\ob\colon \ssec(T^{1,0}_IM) \to \form_\bbC M \otimes \ssec(T^{1,0}_IM)$
	is given by
	\begin{equation}\label{eqn_Obata}
	\ob = \ddb - J\, \dd\, J.
	\end{equation}
\item
	The curvature of the Obata connection is an $\SU(2)$-invariant
	2-form with coefficients in $\emrp_{\bbH}(TM)$, where $\SU(2)$ is
	identified with the group of unit quaternions:
	$$
	R(IX,IY)Z = R(JX,JY)Z = R(KX,KY)Z = R(X,Y)Z.
	$$
\end{enumerate}
\end{prop}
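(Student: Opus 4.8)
The plan is to prove part (1) by exhibiting $\ob = \ddb - J\,\dd\,J$ as a torsion-free connection on $(TM,I)\cong T^{1,0}_IM$ that preserves $I$, $J$ and $K$, and then invoking the uniqueness in Obata's theorem to conclude that it is the Obata connection. First I would check that the right-hand side is genuinely a connection: since both $\ddb$ and $\dd$ satisfy the Leibniz rule, a short computation using $J^2 = -Id$ shows that for a $\bbC$-valued function $f$ and $s \in \ssec(T^{1,0}_IM)$ one has $\ob(fs) = df\otimes s + f\,\ob s$, the $\dd f$ and $\ddb f$ pieces recombining into the full differential. Because $\ddb$, $\dd$ and the $\bbC$-linear extension of $J$ all commute with $I$, the operator $\ob$ is $\bbC$-linear, i.e. $\ob I = 0$. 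To see $\ob J = 0$ I would transport $\ob$ to $T^{0,1}_IM$ by conjugation, obtaining $\dd - J\,\ddb\,J$, and verify directly that $\ob'(Js) = J\,\ob s$ using $J^2 = -Id$; this also yields $\ob K = 0$ since $K = IJ$.

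The heart of the argument, and the step I expect to be the main obstacle, is torsion-freeness, where integrability enters. I would use the description of the (anti)holomorphic structures in terms of the Lie bracket: for $s\in\ssec(T^{1,0}_IM)$ and $\bar X \in T^{0,1}_IM$ one has $\ddb_{\bar X}s = \mathrm{pr}_{1,0}[\bar X, s]$, and dually $\dd_X t = \mathrm{pr}_{0,1}[X,t]$ for $t\in \ssec(T^{0,1}_IM)$, $X\in T^{1,0}_IM$. Evaluating the torsion $T(Z,W) = \ob_Z W - \ob_W Z - [Z,W]$ on the type components of $T_\bbC M$: for one $(1,0)$ and one $(0,1)$ argument the torsion collapses to $[Z,W] - [Z,W] = 0$ using only integrability of $I$ (which gives $[T^{1,0}_IM,T^{1,0}_IM]\subset T^{1,0}_IM$); for two $(1,0)$ arguments $s,s'$ one finds $T(s,s') = -J\,\mathrm{pr}_{0,1}[s,Js'] + J\,\mathrm{pr}_{0,1}[s',Js] - [s,s']$, and substituting the $T^{1,0}_IM$-component of $N_J(s,s') = 0$ (namely $[s,s'] = -J\,\mathrm{pr}_{0,1}[Js,s'] - J\,\mathrm{pr}_{0,1}[s,Js']$) reduces this to $J\,\mathrm{pr}_{0,1}\bigl([s',Js] + [Js,s']\bigr) = 0$ by antisymmetry of the bracket. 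Thus the vanishing of the Nijenhuis tensor of $J$ is exactly what forces the remaining component of the torsion to vanish.

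For part (2), observe first that $\ob I = \ob J = \ob K = 0$ implies that the curvature endomorphisms $R(X,Y)$ commute with $I$, $J$ and $K$, so $R$ takes values in $\emrp_{\bbH}(TM)$. To obtain the type restriction I would split $\ob = \ob^{1,0} + \ob^{0,1}$ with $\ob^{0,1} = \ddb$ and $\ob^{1,0} = -J\,\dd\,J$, so that the $(0,2)$, $(1,1)$ and $(2,0)$ parts of $R$ with respect to $I$ are governed by $\ddb^2$, the mixed anticommutator, and $(J\,\dd\,J)^2$ respectively. Integrability of $I$ gives $\ddb^2 = 0$, and since $J\,\dd\,J$ is the antiholomorphic structure transported by $J$, the composition $(J\,\dd\,J)^2$ reduces to $\dd^2 = 0$; hence $R$ is of type $(1,1)$ with respect to $I$, which is equivalent to $R(IX,IY) = R(X,Y)$.

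Finally, by the uniqueness in Obata's theorem the connection $\ob$ is simultaneously the Obata connection attached to every complex structure $L = aI+bJ+cK$ of the $2$-sphere, since it is torsion-free and preserves each such $L$. Consequently $\ob$ admits a factorization $\ob = \ddb_L - L'\,\dd_L\,L'$ for a complementary $L'$ in the sphere, and the identical argument shows $R$ is of type $(1,1)$ with respect to $L$; in particular $R(JX,JY) = R(KX,KY) = R(X,Y)$. The $(1,1)$-property with respect to $L$ is equivalent to $R(LX,Y) + R(X,LY) = 0$; writing a unit quaternion as $q = \cos\theta + \sin\theta\,L$ and expanding $R(qX,qY)$ then makes the cross term drop out and yields $R(qX,qY) = R(X,Y)$, which is the full $\SU(2)$-invariance.
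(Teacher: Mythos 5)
Your proof is correct, and its skeleton coincides with the paper's: verify that $\ddb - J\,\dd\,J$ is a connection preserving $I$, $J$, $K$, prove it is torsion-free, and let Obata's uniqueness do the rest; then derive the $(1,1)$-type of the curvature from $(\ob^{1,0})^2=(\ob^{0,1})^2=0$ and propagate it around the sphere of complex structures. Where you genuinely diverge is the torsion-freeness step, which you rightly identify as the crux. The paper argues in a local holomorphic frame: writing $e_i=\frac12(\xi_i-\ii I\xi_i)$ with commuting real fields $\xi_i$, it reduces the claim to the symmetry $\dd_{e_i}Je_j=\dd_{e_j}Je_i$ and proves it by showing that the fields $f_i=e_i-\ii Je_i$ satisfy $[f_i,f_j]\in T^{0,1}_IM\cap T^{1,0}_JM=0$; integrability of $J$ enters through $[T^{1,0}_JM,T^{1,0}_JM]\subset T^{1,0}_JM$, and the punchline is that eigenspaces of anticommuting complex structures intersect trivially. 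You instead compute the torsion tensorially on type components via the bracket formulas for $\ddb$ and $\dd$, and cancel the two-$(1,0)$-argument component against the $T^{1,0}_IM$-part of $N_J(s,s')=0$; this is frame-free and makes visible exactly which piece of the Nijenhuis tensor is used, at the cost of slightly heavier bookkeeping. In two places you are usefully more explicit than the paper: you invoke Obata's uniqueness to justify that the same $\ob$ is the Obata connection of every pair $(L,L')$ from the sphere (the paper only remarks that $I$ ``has been chosen arbitrarily''), and you upgrade the three stated identities to invariance under every unit quaternion $q=\cos\theta+\sin\theta\,L$, which is what $\SU(2)$-invariance literally asserts. One phrasing to repair: $\dd$, $\ddb$ and $J$ do not ``commute with $I$'' ($J$ anticommutes with it); the statement you actually need, and which your Leibniz computation delivers, is that $J\,\dd\,J$ commutes with multiplication by $\ii$ on sections of $T^{1,0}_IM$, so that $\ob$ is a complex connection on $T^{1,0}_IM\simeq(TM,I)$, i.e.\ $\ob I=0$.
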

\begin{proof}
It is clear that the formula (\ref{eqn_Obata}) defines a complex connection
on $T^{1,0}_IM$. Note that under identification (\ref{eqn_Iso}) the
endomorphism $J$ of the real tangent bundle maps to complex-antilinear
operator $A\colon T^{1,0}_IM\to T^{1,0}_IM$, $AX = J\overline{X}$.
A short calculation shows that $\ob$ preserves $A$:
\begin{eqnarray}
	(\ob A)X &=& \ob (AX) - A \ob X \nonumber \\ 
	&=& \ddb J\overline{X} - J\,\dd\,J^2\overline{X} - 
	 J\left(\overline{\ddb X}\right) + 	 J\left(\overline{J\,\dd\,JX}\right) \nonumber \\
	&=& \ddb J\overline{X} + J\,\dd \overline{X} - J\,\dd \overline{X} -
	 \ddb (\overline{JX}) = 0. \nonumber
\end{eqnarray}
This proves that $\ob$ preserves the hypercomplex structure.

It remains to check that the corresponding connection on $TM$ is torsion-free.
Let $e_i = \frac{1}{2}(\xi_i - \ii I\xi_i), i = 1,\ldots,2n$ be a local
holomorphic basis of $T^{1,0}_IM$, where $\xi_i$ are pairwise commuting real
vector fields. We have to show that $\ob_{\xi_i}{\xi_j} = \ob_{\xi_j}{\xi_i}$
and $\ob_{I\xi_i}{\xi_j} = \ob_{\xi_j}{I \xi_i}$ for all $i,j=1,\ldots,2n$.
Note that in view of the isomorphism (\ref{eqn_Iso}) 
$$
\ob_{\xi_i}\xi_j = \ob_{\xi_i}e_j = \ob^{1,0}_{e_i}e_j,
$$
because $\ob^{0,1}e_j = \ddb e_j = 0$. So it suffices to show that
$\ob^{1,0}_{e_i}{e_j} = \ob^{1,0}_{e_j}{e_i}$, which is equivalent
to $\dd_{e_i}{Je_j} = \dd_{e_j}{Je_i}$ according to (\ref{eqn_Obata}).

Consider the vector fields 
$$
f_i = e_i - \ii Je_i \in T^{1,0}_JM.
$$
Since the almost complex structure $J$ is integrable we have
$[f_i, f_j] \in T^{1,0}_JM$. We claim that $[f_i,f_j]$ is also contained in
$T^{0,1}_IM$. Indeed, 
$$
[f_i,f_j] = -[Je_i,Je_j]-\ii([Je_i,e_j]+[e_i,Je_j]).
$$
But since $Je_i \in T^{0,1}_IM$ we have $[Je_i,Je_j] \in T^{0,1}_IM$; \
moreover, because $e_i$ are holomorphic
$[Je_i,e_j] = -\dd_{e_j}Je_i \in T^{0,1}_IM$ and
$[e_i,Je_j] = \dd_{e_i}Je_j \in T^{0,1}_IM$.
So we have proved that $[f_i,f_j] \in T^{0,1}_IM \cap T^{1,0}_JM$. But
the operators $I$ and $J$ anticommute and the intersection of their
eigenspaces is trivial. We conclude that 
$[e_i - \ii Je_i,e_j - \ii Je_j] = 0$. Analogously,
$[e_i + \ii Je_i,e_j + \ii Je_j] = 0$, and it follows from these two
equalities that $\dd_{e_i}{Je_j} - \dd_{e_j}{Je_i} = [e_i,Je_j]-[e_j,Je_i] = 0$.
This completes the proof of the first part.

To prove the second part, note that according to (\ref{eqn_Obata})
$\ob^{1,0}= -J\,\dd\,J$, $\ob^{0,1} = \ddb$ and 
since $\ddb^2 = 0$, $\dd^2 = 0$, $J^2 = -Id$ we have
$\left(\ob^{0,1}\right)^2 = 0$, $\left(\ob^{1,0}\right)^2 = 0$.
The standard argument (which works for the Chern connection, for example)
shows that the curvature $R$ is contained in
$\exter^{1,1}_IM \otimes \emrp(T^{1,0}_IM)$, and this implies $R(IX,IY)Z = R(X,Y)Z$.
Next, note that the complex structure $I$ has been chosen
arbitrarily from the whole 2-dimensional sphere of complex structures
on $M$, thus if we replace $I$ with $J$ and $K$ the analogous reasoning
shows that $R(JX,JY)Z = R(KX,KY)Z = R(X,Y)Z$.
Finally, for every $X$ and $Y$ the endomorphism $R(X,Y)$ is $\bbH$-linear
since the Obata connection preserves the hypercomplex structure. 
\end{proof}

In order to study the hypercomplex structures on Lie groups, it will be
convenient to express the Obata connection in terms of the commutator
of real vector fields. We are going to use the following well-known
formula for the $\ddb$-operator (see \cite{Ga}, where this operator
appears in a similar fashion).

\begin{prop}\label{prop_ddb}
Let $M$ be a smooth manifold and $I$ a complex structure on it.
Considering $(TM,I)$ as a holomorphic bundle (using the isomorphism
(\ref{eqn_Iso})), we can write the corresponding $\ddb$-operator as
\begin{equation}\label{eqn_ddb}
\ddb_X Y = \frac12([X,Y]+I[IX,Y]).
\end{equation}
\end{prop}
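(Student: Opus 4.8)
The plan is to reduce the statement to the well-known description of the Dolbeault operator on the holomorphic tangent bundle in terms of the Lie bracket, and then to translate that description into the language of real vector fields using the isomorphism (\ref{eqn_Iso}). The starting point is the identity $\ddb_{\bar Z}s = [\bar Z, s]^{1,0}$, valid for every $\bar Z \in T^{0,1}_IM$ and every section $s$ of $T^{1,0}_IM$, where $(\cdot)^{1,0}$ denotes the projection onto $T^{1,0}_IM$. I would justify this by working in local holomorphic coordinates $z^1,\dots,z^n$: the fields $\dd/\dd z^i$ form a local holomorphic frame, which is annihilated by $\ddb$, while a direct computation gives $[\bar Z, \dd/\dd z^i] \in T^{0,1}_IM$, so that $[\bar Z, \dd/\dd z^i]^{1,0} = 0$. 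Since both sides of the desired identity obey the Leibniz rule in $s$, agreement on this frame forces agreement on arbitrary sections.

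With this formula in hand, I would interpret the left-hand side of (\ref{eqn_ddb}) for real arguments. Under (\ref{eqn_Iso}) a real field $Y$ corresponds to $Y^{1,0} = \frac12(Y - \ii IY)$, the $(0,1)$-part of $X$ is $X^{0,1}=\frac12(X+\ii IX)$, and the inverse of (\ref{eqn_Iso}) on $T^{1,0}_IM$ is $Z \mapsto Z + \overline Z$. Thus $\ddb_X Y$ is the real field associated to $[X^{0,1},Y^{1,0}]^{1,0}$. The computation then expands $[X^{0,1},Y^{1,0}] = \frac14\bigl([X,Y]+[IX,IY] + \ii([IX,Y]-[X,IY])\bigr)$, applies the projection $W \mapsto \frac12(W - \ii IW)$, and takes twice the real part to return to $TM$.

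The final step, and the only place where integrability enters, is the simplification of the resulting expression using the vanishing of the Nijenhuis tensor (\ref{eqn_Nijen}). Substituting the relation $[IX,IY] = [X,Y]+I[IX,Y]+I[X,IY]$ into the real field obtained above collapses the four brackets into $\frac12([X,Y]+I[IX,Y])$, which is precisely (\ref{eqn_ddb}). I expect the main obstacle to be bookkeeping rather than anything conceptual: one must track the complex-linear and complex-antilinear slots carefully and stay consistent about the meaning of $\ddb_X Y$ for real arguments, since a misplaced sign or factor in the projection, or in the passage between $TM$ and $T^{1,0}_IM$, would spoil the identity. Once these conventions are fixed, the Nijenhuis relation performs all of the algebraic work.
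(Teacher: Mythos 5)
Your proposal is correct, and the computation checks out: the expansion $[X^{0,1},Y^{1,0}]=\frac14\bigl([X,Y]+[IX,IY]+\ii([IX,Y]-[X,IY])\bigr)$, followed by projection and doubling the real part, gives $\frac14\bigl([X,Y]+[IX,IY]+I[IX,Y]-I[X,IY]\bigr)$, which the Nijenhuis relation collapses to $\frac12([X,Y]+I[IX,Y])$. But your route is genuinely different from the paper's. The paper never computes the bracket of $(0,1)$- and $(1,0)$-parts at all: it takes the right-hand side of (\ref{eqn_ddb}) as given and verifies that it satisfies the properties that uniquely characterize the $\ddb$-operator of a holomorphic bundle --- $C^\infty(M)$-linearity in $X$, the Leibniz rule and $\bbC$-linearity in $Y$ (this is where the paper spends its Nijenhuis identity), and vanishing on holomorphic sections, the last step using the characterization that $Y$ is holomorphic iff $\lie_Y I=0$. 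That argument is verification-style and coordinate-free, but it requires already knowing the formula to be checked. Your argument is derivation-style: starting from the standard identity $\ddb_{\bar Z}s=[\bar Z,s]^{1,0}$ (justified in holomorphic coordinates, hence leaning on Newlander--Nirenberg, which the paper has in any case invoked earlier), you produce the formula rather than confirm it, at the cost of the projection and sign bookkeeping you correctly flag as the main hazard, and with integrability entering twice --- once through the existence of holomorphic coordinates and once through the Nijenhuis relation at the end. Both proofs are complete; yours explains where the formula comes from, the paper's is shorter once the formula is guessed.
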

\begin{proof}
It is clear that (\ref{eqn_ddb}) is $\bbR$-linear in both $X$ and $Y$.
Moreover, since the Nijenhuis tensor of $I$ (\ref{eqn_Nijen})
vanishes we see that $\ddb_X(IY) = I\ddb_X Y$, i.e (\ref{eqn_ddb}) is
$\bbC$-linear in $Y$. Next, observe that it satisfies the Leibniz rule:
\begin{eqnarray}
	\ddb_X(fY) &=& \frac{1}{2}([X,fY]+I[IX,fY]) \nonumber \\
	&=& \frac{1}{2}(f([X,Y]+I[IX,Y]) + (\lie_Xf) Y + (\lie_{IX}f)IY) \nonumber\\
	&=& f\ddb_X Y + \frac{1}{2}(\lie_Xf + \ii\lie_{IX}f) Y = f\ddb_X Y + 
		(\ddb_X f) Y, \nonumber
\end{eqnarray}
and that it is $C^\infty(M)$-linear in $X$:
$$
	\ddb_{fX}Y = \frac{1}{2}([fX,Y]+I[fIX,Y]) = f\ddb_XY - 
	\frac{1}{2}(\lie_Yf)(X + I^2X) = f\ddb_XY.
$$

Next we have to show that (\ref{eqn_ddb}) vanishes when $Y$ is holomorphic.
But it is known that $Y$ is a holomorphic section of $(TM,I)$ if and only
if $\lie_Y I = 0$. Now, $(\lie_Y I)(IX) = [Y,I^2X]-I[Y,IX] = 2\ddb_X Y = 0$.

Since the properties that we have checked above uniquely determine the
$\ddb$-operator of a holomorphic vector bundle, this completes the proof. 
\end{proof}

It follows from Propositions \ref{prop_ob} and \ref{prop_ddb} that the
Obata connection on a hypercomplex manifold $(M,I,J,K)$ can be written in
the following form:
\begin{equation}\label{eqn_Obata2}
\ob_XY = \frac{1}{2}\big([X,Y]+I[IX,Y]-J[X,JY]+K[IX,JY]\big).
\end{equation}

\section{Hypercomplex structures on Lie groups}\label{3}

In this section we review the construction of homogeneous hypercomplex
structures on compact Lie groups following Joyce \cite{J}. Let
$G$ be a compact semisimple Lie group and $\frg$ its Lie algebra.
Let $\frt \subset \frg$ be a maximal torus.

The first step in constructing the hypercomplex structure is to obtain the
following decomposition of $\frg$ (cf. \cite{J}, Lemma 4.1):
$$
	\frg = \frb \oplus \bigoplus_{k=1}^n \frd_k \oplus \bigoplus_{k=1}^n \frf_k,
$$
where $\frb$ is an abelian subalgebra, $\frd_k$ are subalgebras isomorphic
to $\mathfrak{su}(2)$ and $\frf_k$ are subspaces with the following properties:
\begin{enumerate}
\item
	$[\frd_k, \frb] = 0$ and $\frt \subset \frb \oplus \bigoplus_{k=1}^n \frd_k$;
\item 
	$[\frd_k, \frf_j]=0$ for $j > k$;
\item
	$[\frd_k, \frf_k] \subset \frf_k$ and this Lie bracket action
	of $\frd_k$ on $\frf_k$ is isomorphic
	to the direct sum of some number of copies of
	$\mathfrak{su}(2)$-action on $\bbC^2$ by matrix multiplication from the left.
\end{enumerate}

Note that $\frd_k \oplus \fru(1)\simeq \frsu(2)\oplus \fru(1)$ can be identified
with the quaternion algebra $\bbH$. Since the subalgebra $\frb$ is isomorphic
to a direct sum of $\fru(1)$'s we can (after possibly adding some
extra copies of $\fru(1)$, i.e. multiplying $G$ by some number of $S^1$) identify
$\frb \oplus \bigoplus_{k=1}^n \frd_k$ with $\bbH^m$ for some $m$. Denote by
$\mathcal{I}_k, \mathcal{J}_k, \mathcal{K}_k$ the elements of $\frd_k$ corresponding
to the standard imaginary quaternions under the identification 
$\frd_k \oplus \fru(1)\simeq \bbH$. We define a triple of complex structures
$I, J, K \in \emrp(\frg)$ as follows: the action of $I, J, K$ on 
$\frb \oplus \bigoplus_{k=1}^n \frd_k \simeq \bbH^m$ is multiplication
by the corresponding imaginary quaternion from the left and the action on $\frf_k$ is
given by
$$
IX = [\mathcal{I}_k, X], \qquad JX = [\mathcal{J}_k, X], \qquad KX = [\mathcal{K}_k, X]
$$
for $X \in \frf_k$. The endomorphisms $I$, $J$, $K$ define three left-invariant
almost-complex structures on $G$. One can check (\cite{J}, Lemma 4.3) that they
are integrable and satisfy the quaternionic relations thus giving a hypercomplex
structure on $G$.

We are interested in the case when $G = \SU(3)$. The Lie algebra 
$\frg$ is the algebra of $3\times 3$ skew-Hermitian trace-free 
matrices. Such a matrix can be represented in the form
\begin{equation}\label{eqn_matr}
\begin{pmatrix}
D                         & f\\
-\overline{f}\mathstrut^t & b
\end{pmatrix}
\end{equation}
where $D\in \fru(2)$, $f\in \bbC^2$ is a column-vector and $b\in \bbC$
with $\mathrm{tr}(D) + b =0$. The decomposition of $\frg$ described above
takes form $\frg = \frb\oplus\frd\oplus\frf$ where $\frd$ consists of
matrices with zero $f$ and $b$, $\frf$ --- of matrices with 
zero $D$ and $b$ and $\frb$ consists
of diagonal matrices commuting with $\frd$. Note that the adjoint action
of $\frb$ preserves $\frf$ and $[\frf,\frf]\subset \frb\oplus \frd$
thus we obtain $\bbZ/2\bbZ$-grading: $\frg = \frg_0 \oplus \frg_1$ with 
$\frg_0 = \frb\oplus \frd$ and $\frg_1 = \frf$.

We can also mention that it is possible to choose the identification
$\frb\oplus \frd \simeq \bbH$ and thus the corresponding hypercomplex structure
in such a way that the Killing form will be quaternionic Hermitian.
This turns $G$ into an HKT-manifold \cite{GP}.

\section{Holonomy of the Obata connection}\label{4}

\subsection{The Euler vector field}

Consider the Lie group $G = \SU(3)$ with the hypercomplex
structure described above. The Lie algebra of $G$
is $\bbZ/2\bbZ$-graded: $\frg = \frg_0 \oplus \frg_1$,
where $\frg_0 \simeq \mathfrak{su}(2)\oplus\mathfrak{u}(1)$
will be identified with the algebra of quaternions $\bbH$,
and $\frg_1$ is a $\frg_0$-module with the action of $\bbH$
obtained from the adjoint action of $\frg_0$ as described in
the previous section.

We will identify the elements of $\frg$ and left-invariant
vector fields on $G$. Denote by $\EE$ the element of $\frg_0$
(and the vector field) corresponding to $-1\in \bbH$ under
the isomorphism $\frg_0 \simeq \bbH$. We will call $\EE$ the
Euler vector field. Choose also some non-zero element $W\in \frg_1$.
Then $\langle\EE, W\rangle$ form an $\bbH$-basis in $\frg$.
Recall that the action of $\bbH$ on $\frg_1$ is given by
$$
IW = [W,I\EE],\qquad JW=[W,J\EE],\qquad KW=[W,K\EE].
$$

\begin{rem}\label{rem_hopf_surf}
Note that the subgroup $G_0$ corresponding to $\frg_0$ is
isomorphic to $\SU(2)\times \U(1)$ and it is a hypercomplex 
submanifold of $G$. If we identify $\frg_0$ with the quaternion
algebra $\bbH$ then the hypercomplex structure is given
by left quaternionic multiplication. It follows from (\ref{eqn_Obata2})
that the Obata connection in this case is given by 
$$
\ob_X Y = -Y\cdot X
$$
for any $X, Y \in \frg_0$, where $\cdot$ is multiplication in $\bbH\simeq \frg_0$.
It is easy to check that the Obata connection on $G_0$ is flat.
The group $G_0 \simeq \SU(2)\times \U(1)$ is diffeomorphic to a Hopf
manifold $(\bbR^4\backslash\{0\})/\Gamma$, where $\Gamma$ is an infinite cyclic
group generated by the homothety $z\mapsto \lambda z$ for some 
$\lambda\in \bbR_{>0}$. The vector field on $G_0$ corresponding to $\EE \in \frg_0$
lifts to the ordinary Euler vector field on $\bbR^4\backslash\{0\}$ which
generates the flow of homotheties. It is remarkable that the Euler 
vector field $\EE$ on $\SU(3)$ retains some useful properties,
as we show in the following proposition.
It should be mentioned that the vector field $\EE$ appeared in \cite{PPS}, but
the notation in that paper slightly differs from ours.
\end{rem}

\begin{prop}\label{prop_Euler}
The vector field $\EE$ possesses the following properties:
\begin{enumerate}
\item 
	$\EE$ is holomorphic with respect to $I$, $J$, $K$;
\item
	$\ob \EE = Id$, where $Id$ is understood as a section of 
	$\exter^1G\otimes TG \simeq \emrp(TG)$;
\item
	$\ob^2 \EE = 0$;
\item
	If we denote by $h$ the Killing form on $\frg$, then 
	$$
	\ob_\EE h = -2h,\qquad \ob_{I\EE} h = \ob_{J\EE} h = \ob_{K\EE} h = 0.
	$$
\end{enumerate}
\end{prop}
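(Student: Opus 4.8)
The plan is to reduce everything to computations in the Lie algebra $\frg=\frg_0\oplus\frg_1$: all fields in sight are left-invariant, so brackets are Lie-algebra brackets and $\ob$ is given by the algebraic formula (\ref{eqn_Obata2}). Before touching the four items I would record two structural facts about $\EE$ that drive all of the cancellations. First, since $\EE\leftrightarrow-1\in\bbH$ lies in the centre of $\frg_0$ (here $\frb$ is abelian and $[\frb,\frd]=0$), we have $[\EE,\frg_0]=0$. Second, from $\EE\leftrightarrow-1$ and the fact that $I,J,K$ act on $\frg_0$ by left quaternion multiplication we get $I\EE=-\mathcal{I}$, $J\EE=-\mathcal{J}$, $K\EE=-\mathcal{K}$; substituting into the defining relations $IW=[W,I\EE]$ etc.\ gives $I|_{\frf}=\mathrm{ad}_{\mathcal{I}}|_{\frf}$, and likewise for $J,K$. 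Because $[\EE,\mathcal{I}]=[\EE,\mathcal{J}]=[\EE,\mathcal{K}]=0$, the Jacobi identity then yields $[\mathrm{ad}_\EE,I]=[\mathrm{ad}_\EE,J]=[\mathrm{ad}_\EE,K]=0$ on $\frf$.

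Part (1) is then immediate: for left-invariant fields $\lie_\EE I=[\mathrm{ad}_\EE,I]$ as an endomorphism of $\frg$, and by the two facts this commutator vanishes on $\frg_0$ (where $\mathrm{ad}_\EE=0$) and on $\frf$ (where $\mathrm{ad}_\EE$ commutes with $I$); hence $\lie_\EE I=0$, so by the characterization of holomorphic sections used in Proposition~\ref{prop_ddb} the field $\EE$ is holomorphic with respect to $I$, and symmetrically to $J,K$. For (2) I would compute $\ob\EE$ on each summand. On $\frg_0$, Remark~\ref{rem_hopf_surf} gives $\ob_Y\EE=-\EE\cdot Y=Y$ since $\EE\leftrightarrow-1$. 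On $\frf$, I substitute $Y=\EE$ into (\ref{eqn_Obata2}): the two terms carrying $\mathrm{ad}_\EE$ cancel because $\mathrm{ad}_\EE$ commutes with $I$, and the two remaining terms simplify, via the defining relations $JV=[V,J\EE]$, to $2W$, so the factor $\tfrac12$ gives $\ob_W\EE=W$. Thus $\ob\EE=Id$, and (3) is then formal: the identity endomorphism is parallel for the connection induced on $\emrp(TG)$, so $\ob^2\EE=\ob(\ob\EE)=\ob\,Id=0$.

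Part (4) is the substantive one. Bi-invariance of $h$ makes $h(Y,Z)$ constant on left-invariant fields, so $(\ob_X h)(Y,Z)=-h(\ob_XY,Z)-h(Y,\ob_XZ)$, and the grading is $h$-orthogonal. Computing as in (2), $\ob_\EE$ acts as $Id$ on $\frg_0$ and as $\mathrm{ad}_\EE+Id$ on $\frf$; then on $\frg_0\times\frg_0$ one gets $-2h$, on $\frf\times\frf$ the $\mathrm{ad}_\EE$-contributions cancel by $\mathrm{ad}$-invariance of $h$ and again give $-2h$, and the cross terms vanish, proving $\ob_\EE h=-2h$. For $\ob_{I\EE}$ the same formula yields $\ob_{I\EE}W=0$ on $\frf$ (all terms cancel) and $\ob_{I\EE}Y=Y\cdot i$ on $\frg_0$ by Remark~\ref{rem_hopf_surf}; hence $\ob_{I\EE}h$ vanishes on $\frf\times\frf$ and on the cross terms, while on $\frg_0\times\frg_0$ it equals $-h(Y\cdot i,Z)-h(Y,Z\cdot i)$. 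The statements for $J\EE,K\EE$ follow by the symmetric computation.

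I expect the main obstacle to be exactly this last expression. It vanishes only because right multiplication by $i$ is skew-symmetric for the Killing form, which in turn holds \emph{only} for the quaternionic-Hermitian normalization of the identification $\frg_0\simeq\bbH$ singled out at the end of Section~\ref{3}; for a general scaling of the real axis it is false. I would therefore fix that normalization explicitly and verify the skewness directly on $\bbH\simeq(\frg_0,h)$, where $h(x,y)=\mathrm{Re}(\bar x y)$ up to a constant gives $h(x\cdot i,y)+h(x,y\cdot i)=\mathrm{Re}\big((-i+i)\bar x y\big)=0$. The other bookkeeping hazard throughout is keeping the signs straight and respecting the asymmetric roles of $I$ and $J$ in (\ref{eqn_Obata2}) — in particular, substituting $X=I\EE$ turns $IX$ into $-\EE$ and reintroduces $\mathrm{ad}_\EE$, so the cancellations in $\ob_{I\EE}W=0$ rely again on $\mathrm{ad}_\EE$ commuting with $I$ and $K$.
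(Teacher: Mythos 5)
Your proof is correct. For parts (1)--(3) it matches the paper's own argument: the paper proves holomorphy via the Jacobi identity and centrality of $\EE$ in $\frg_0$ (your $[\mathrm{ad}_\EE,I]=0$ on $\frf$ is the same computation), and in part (2) it likewise splits into the $\frg_0$-case (Remark~\ref{rem_hopf_surf}) and the $\frg_1$-case, where the first two terms of (\ref{eqn_Obata2}) vanish --- phrased there as $\ddb\EE=0$, which is exactly your cancellation --- and the remaining two terms give $\frac12(-J^2X+KJIX)=X$. Where you genuinely diverge is part (4). The paper runs a single uniform computation valid for all $X,Y\in\frg$ at once: torsion-freeness gives $\ob_\EE X=\ob_X\EE+[\EE,X]$, hence $(\ob_\EE h)(X,Y)=-h(\ob_X\EE+[\EE,X],Y)-h(X,\ob_Y\EE+[\EE,Y])=-2h(X,Y)$ by part (2) and ad-invariance, and for the other three equalities it uses $\ob I=0$ to reduce to $(\ob_{I\EE}h)(X,Y)=-h(IX,Y)-h(X,IY)$, which vanishes because $h$ is quaternionic Hermitian. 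You instead compute the operators $\ob_\EE$ and $\ob_{I\EE}$ block by block on the grading $\frg_0\oplus\frg_1$ and verify the identity on each block separately. Your route is more laborious, but it buys a genuine refinement: it localizes exactly where the quaternionic-Hermitian normalization of the identification $\frg_0\simeq\bbH$ (mentioned at the end of Section~\ref{3}) is indispensable --- namely on the $\frg_0\times\frg_0$ block, where $I$ acts by left quaternion multiplication rather than by an inner derivation --- whereas on $\frg_1$ the skew-adjointness of $I=\mathrm{ad}_{\mathcal{I}}$ is automatic from ad-invariance of $h$. The paper leaves this dependence implicit in the phrase ``using the fact that $h$ is quaternionic Hermitian'', and you are right that without that normalization the second half of part (4) would fail.
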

\begin{proof}
\begin{enumerate}
\item
	We have $(\lie_\EE I)X = [\EE,IX]-I[\EE,X]$ which obviously equals zero
	when $X\in \frg_0$ since $\EE$ lies in the center of $\frg_0$. If $X\in \frg_1$
	then $IX = [X,I\EE]$ and $[\EE,[X,I\EE]] = [[\EE,X],I\EE] = I[\EE,X]$, so
	again $(\lie_\EE I)X = 0$. The same argument applies to $J$ and $K$.
\item
	For $X \in \frg_0$ we have $\ob_X\EE = - X\cdot \EE = X$ (see Remark 
	\ref{rem_hopf_surf}).
	
	Now suppose that $X \in \frg_1$.
	It follows from 1 that $\ddb \EE = 0$ and in view of (\ref{eqn_ddb})
	and (\ref{eqn_Obata2}) 
	$$
	\ob_X\EE = \frac{1}{2}(-J[X,J\EE]+K[IX,J\EE]) = \frac{1}{2}(-J^2X+KJIX) = X.
	$$
\item
	Immediately follows from 2.
\item
	A straightforward computation using the bi-invariance of the Killing form:
	\begin{eqnarray}
		(\ob_\EE h)(X,Y) &=& -h(\ob_\EE X, Y) - h(X, \ob_\EE Y) \nonumber\\
		&=& - h(\ob_X\EE + [\EE,X],Y) - h(X,\ob_Y\EE + [\EE,Y]) \nonumber\\
		&=& -2h(X,Y). \nonumber
	\end{eqnarray}
	The last three equalities are obtained analogously using the fact that
	$h$ is quaternionic Hermitian.
\end{enumerate}
\end{proof}

\begin{rem}\label{rem_euler}
Note that if $M$ is a compact manifold with a torsion-free connection $\nabla$
then the existence of a vector field $\EE$ with $\nabla \EE = Id$
has some strong implications for $\nabla$. Namely, observe that for any
vector field X we have $\nabla_\EE X = X + \lie_\EE X$ and
$\nabla_\EE \alpha = -\alpha + \lie_\EE \alpha$ for any 1-form
$\alpha$. Next, take a tensor field of type $(k,m)$: 
$T\in\ssec\left((TM)^{\otimes k}\otimes(T^*M)^{\otimes m}\right)$.
Representing $T$ locally as a sum of the elements of the form 
$X_1\otimes\ldots\otimes X_k\otimes\alpha_1\otimes\ldots\otimes\alpha_m$,
we obtain $\nabla_\EE T = (k-m)T+\lie_\EE T$. Suppose that $\nabla$ preserves
$T$; then $\lie_\EE T = (m-k)T$. If $T$ is non-zero at some point, take an
integral curve of $\EE$ through this point and observe that unless $m=k$
the norm (with respect to an arbitrary metric) of $T$ restricted to this integral
curve will tend to infinity which is impossible for compact $M$.
This means that $\nabla$ 
can preserve tensor fields only of type $(k,k)$, as opposed to, say, Levi-Civita
connection. Note also that the vector field $\EE$ is always unique when it exists,
for if $\nabla \EE' = Id$ then $\nabla$ preserves $\EE - \EE'$ and therefore
 $\EE - \EE' = 0$.
\end{rem}

\subsection{Computation of the holonomy}

We will need the following technical lemma.

\begin{lem}\label{lem_1}
Denote by $R$ the curvature of the Obata connection.
\begin{enumerate}
\item
	$R(X,IX)X + JR(X,KX)X - KR(X,JX)X = 0$ for all $X$;
\item
	Suppose that $\mathcal{Z}$ is a vector field such that
	$R(X,Y)\mathcal{Z} = 0$ for any vector fields $X$ and $Y$.
	Then $R(\mathcal{Z},X)X = 0$ for all $X$.
\end{enumerate}
\end{lem}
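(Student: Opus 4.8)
The plan is to derive both identities from the first Bianchi identity of the torsion-free Obata connection, combined with the $\SU(2)$-invariance and $\bbH$-linearity of $R$ recorded in Proposition~\ref{prop_ob}(2). Throughout I will use that $R(X,Y)$ commutes with $I,J,K$, that $R(IX,IY)=R(JX,JY)=R(KX,KY)=R(X,Y)$, and the Bianchi identity $R(X,Y)Z+R(Y,Z)X+R(Z,X)Y=0$.

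For the first part, I would begin by using $\bbH$-linearity to move the outer $J$ and $K$ into the last slot, rewriting the claim purely in terms of the curvature endomorphisms:
\[
R(X,IX)X+JR(X,KX)X-KR(X,JX)X = R(X,IX)X+R(X,KX)(JX)-R(X,JX)(KX).
\]
The key step is to apply the Bianchi identity to the triple $(X,KX,JX)$, which expresses $R(X,KX)(JX)$ through $R(KX,JX)X$ and $R(X,JX)(KX)$; after substitution the two terms carrying $KX$ in the last slot cancel and the expression collapses to $R(X,IX)X-R(KX,JX)X$. Finally I invoke $\SU(2)$-invariance: since $KI=J$ one has $JX=K(IX)$, so $R(KX,JX)=R(KX,K(IX))=R(X,IX)$, and the difference vanishes. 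The only delicate point here is bookkeeping the quaternion relations so as to pick the Bianchi triple for which the unwanted terms cancel.

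For the second part, the natural first move is to feed the hypothesis into the Bianchi identity. Applying it to $(\mathcal{Z},X,Y)$ and using $R(X,Y)\mathcal{Z}=0$ yields the symmetry $R(\mathcal{Z},X)Y=R(\mathcal{Z},Y)X$. I would then set $S(X,Y):=R(\mathcal{Z},X)Y$ and record its structure: it is symmetric by the line just derived, and $\bbH$-linear in the second argument because $R(\mathcal{Z},X)$ commutes with $I,J,K$; combining symmetry with this, it is also $\bbH$-linear in the first argument, since $S(IX,Y)=S(Y,IX)=IS(Y,X)=IS(X,Y)$. The conclusion is then forced by the anticommutativity of $I$ and $J$: computing $S(IX,JX)$ in two ways gives $S(IX,JX)=IJ\,S(X,X)=KS(X,X)$ on one hand, and $S(IX,JX)=S(JX,IX)=JI\,S(X,X)=-KS(X,X)$ on the other, so $KS(X,X)=0$ and hence $R(\mathcal{Z},X)X=S(X,X)=0$.

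I expect the main obstacle to lie in the second part, and specifically in resisting unproductive routes: naive combinations of $\SU(2)$-invariance with the Bianchi symmetry tend to reproduce tautologies such as $R(\mathcal{Z},X)X=R(\mathcal{Z},X)X$, so no single invariance move suffices. The real leverage is the observation that the hypothesis turns $R(\mathcal{Z},\cdot)\cdot$ into a symmetric, $\bbH$-bilinear form, after which the incompatibility $IJ=-JI$ of the complex structures immediately annihilates it. Verifying that $S$ is genuinely $\bbH$-linear in both slots, which uses the Bianchi symmetry in an essential way, is the step I would check most carefully.
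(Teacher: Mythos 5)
Your proof is correct --- I checked every step --- and it reaches both claims by a route organized differently from the paper's, though it draws on the same three ingredients: the first Bianchi identity, the $\SU(2)$-invariance of $R$, and the $\bbH$-linearity of the endomorphisms $R(X,Y)$ from Proposition~\ref{prop_ob}. The paper first derives a single master identity, $R(Z,IX)Y = R(Z,IY)X + IR(X,Y)Z$ for all $X,Y,Z$ (two applications of Bianchi plus invariance), and then obtains part (1) by substituting $Y=JX$, $Z=X$, and part (2) by setting $Z=\mathcal{Z}$, which gives $R(\mathcal{Z},IX)IX=-R(\mathcal{Z},X)X$ and hence the sign contradiction $R(\mathcal{Z},X)X = R(\mathcal{Z},IJKX)\,IJKX = -R(\mathcal{Z},X)X$ since $IJK=-\mathrm{Id}$. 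You bypass the master identity: in (1) you apply Bianchi once to the specific triple $(X,KX,JX)$ and close with a single use of $K$-invariance via $KI=J$, which is slightly more economical than the paper's double application; in (2) you extract the symmetry $R(\mathcal{Z},X)Y=R(\mathcal{Z},Y)X$ directly from Bianchi and the hypothesis, upgrade it to $\bbH$-linearity of $S(X,Y)=R(\mathcal{Z},X)Y$ in both slots, and then invoke the general linear-algebra fact that a symmetric pairing commuting with the quaternionic structure in both arguments is annihilated by $IJ=-JI$. Your part (2) is arguably the more conceptual argument, isolating a reusable statement about symmetric quaternionic-bilinear forms, whereas the paper's is a compact sign-flip computation; what the paper's organization buys is that one identity serves both parts of the lemma.
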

\begin{proof}
We will use the first Bianchi identity (which is true for any
torsion-free connection) and the fact that the curvature
of the Obata connection is an $\SU(2)$-invariant 2-form with
coefficients in $\bbH$-linear endomorphisms by Proposition \ref{prop_ob}.
We have:
\begin{eqnarray}
	R(X,IY)Z &=& R(Z,IY)X + R(X,Z)IY \nonumber \\
	&=& R(Z,IY)X + IR(Y,Z)X + IR(X,Y)Z, \nonumber
\end{eqnarray}
where the second equality follows from $\bbH$-linearity of $R(X,Z)$ and
the first Bianchi identity. Similarly
$$
	R(X,IY)Z = R(Y,IX)Z = R(Z,IX)Y + IR(Y,Z)X,
$$
and we obtain the following identity for any vector fields $X$, $Y$, $Z$:
$$
R(Z,IX)Y = R(Z,IY)X + IR(X,Y)Z.
$$
Substituting $Y = JX$, $Z = X$ yields the first claim of the lemma.
It also follows that $R(\mathcal{Z},IX)IX = -R(\mathcal{Z},X)X$ and the same
is true for $J$ and $K$. Thus, 
$R(\mathcal{Z},X)X = R(\mathcal{Z},IJKX)IJKX = -R(\mathcal{Z},X)X$
which proves the second claim.
\end{proof}

Let us make a few remarks about the curvature of the Obata connection
on $\SU(3)$. Recall that we have the decomposition 
$\mathfrak{su}(3) = \frg_0 \oplus \frg_1$ where $\frg_0$ and
$\frg_1$ are one-dimensional $\bbH$-subspaces spanned
by $\EE$ and $W$ respectively. Note that it is possible to
choose $W$ in such a way that $\ob_W W \neq 0$: if not we would have
$\ob_W W = 0$ for all $W \in \frg_1$. Since the Obata connection is 
$\bbH$-linear this would also imply 
$\ob_{IW} W = \ob_{JW} W = \ob_{KW} W = 0$.
Since $\frg_1$ is one-dimensional we would have $\ob_X Y = 0$
and consequently $[X,Y] = 0$ for all $X$, $Y$ in $\frg_1$ which is
obviously not true. 

Recall that we have the following expression for the curvature:
$R(X,Y)Z = \mathrm{Alt}(\ob^2 Z)(X,Y)$, where 
$\ob^2 Z \in \exter^1G\otimes \exter^1G\otimes TG$
is a bilinear form with values in vector fields and $\mathrm{Alt}$ means
antisymmetrization of this form.
From the third part of Proposition \ref{prop_Euler} we obtain 
$R(X,Y)\EE = \mathrm{Alt}(\ob^2\EE)(X,Y) = 0$, thus $\frg_0$ lies in the
kernel of all the endomorphisms $R(X,Y)$.

We claim that $R(X,Y)\frg_1 = \frg_1$. Suppose that 
$X,Y \in \frg_0$, then the first Bianchi identity
implies $R(X,Y)\frg_1 = 0$. Next take $X \in \frg_0$ and $Y \in \frg_1$;
since the subspace $\frg_1$ is one-dimensional and
the curvature is $\SU(2)$-invariant,
it follows from the second part of Lemma \ref{lem_1} that $R(X,Y)Z = 0$
for any $Z \in \frg_1$.
Note that the Obata connection respects the grading on $\frg$, consequently
if $X,Y \in \frg_1$ then $R(X,Y)\frg_1 \subset \frg_1$. We remark that
the image of $R(X,Y)$ must be nontrivial for some $X,Y$, because otherwise
the Obata connection would be flat, which is not the case.
We will need the following statement.

\begin{prop}\label{prop_curv}
The holonomy group of the Obata connection contains an element
that acts identically on $\frg_0$ and multiplies $\frg_1$ by
a non-zero non-real quaternion.
\end{prop}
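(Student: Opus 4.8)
The plan is to extract a holonomy element by exploiting the structure we have already established: the Obata connection respects the $\bbZ/2\bbZ$-grading $\frg = \frg_0 \oplus \frg_1$, the curvature kills $\frg_0$ entirely (by part 3 of Proposition~\ref{prop_Euler}), and $R(X,Y)$ maps $\frg_1$ into $\frg_1$. Since $\frg_1$ is a one-dimensional $\bbH$-space and every $R(X,Y)$ is $\bbH$-linear, each endomorphism $R(X,Y)$ restricted to $\frg_1$ is just left-multiplication by some quaternion $q(X,Y) \in \bbH$ (acting identically on $\frg_0$). The holonomy algebra is spanned by such curvature endomorphisms together with their covariant derivatives, so the holonomy group will contain elements of the form \textquotedblleft identity on $\frg_0$, multiplication by $\exp(q)$ on $\frg_1$\textquotedblright. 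The whole point is therefore to produce a curvature value $q(X,Y)$ that is \emph{not real}, i.e.\ has nonzero imaginary part.

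First I would compute $R(W,W')W''$ for $W,W',W'' \in \frg_1$ using the explicit formula $R(X,Y)Z = \mathrm{Alt}(\ob^2 Z)(X,Y)$ together with the expression (\ref{eqn_Obata2}) for $\ob$ in terms of Lie brackets. Because the hypercomplex structure on $\frg_1$ is given by the adjoint action of $\mathcal{I}, \mathcal{J}, \mathcal{K}\in\frg_0$, every such bracket computation reduces to iterated commutators in $\frsu(3)$, which can be carried out concretely in the matrix model (\ref{eqn_matr}). Having fixed a convenient $W$ with $\ob_W W \neq 0$ (which we know exists from the discussion preceding the statement), I would evaluate the relevant curvature quaternion on a specific pair from $\{W, IW, JW, KW\}$ and verify that the resulting $q$ has a genuine imaginary component. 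The algebraic identities in Lemma~\ref{lem_1}, especially the relation $R(\mathcal{Z},X)X = R(\mathcal{Z},IJKX)IJKX = -R(\mathcal{Z},X)X$ and the Bianchi-type identity derived there, should cut down the number of independent components I actually need to compute and may force certain combinations to vanish, isolating the non-real part.

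Once a single non-real $q = q(W,W')$ is in hand, the corresponding one-parameter subgroup of the holonomy (obtained by transporting this curvature endomorphism along loops and using the Ambrose--Singer description) consists of transformations fixing $\frg_0$ pointwise and acting on $\frg_1$ by left multiplication by $\exp(t\,q)$. Since $q$ is non-real, $\exp(t\,q)$ is a non-real quaternion for suitable $t$, and it acts identically on $\frg_0$ by construction, giving exactly the asserted element.

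The main obstacle I expect is purely computational rather than conceptual: showing that the imaginary part of $q$ is actually nonzero. A priori the curvature on $\frg_1$ could land in the real (scalar) part of $\bbH$, in which case the holonomy element would merely rescale $\frg_1$ and the proposition would fail. Ruling this out requires a faithful commutator computation in $\frsu(3)$ and careful bookkeeping of the $I,J,K$ actions; the risk is a sign error or a cancellation that spuriously kills the imaginary part. I would hedge against this by cross-checking the computed $q$ against the constraints imposed by Lemma~\ref{lem_1} (which the true curvature must satisfy) and by re-deriving $\ob_W W$ through two independent routes.
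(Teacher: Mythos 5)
Your setup coincides with the paper's (Ambrose--Singer, the endomorphisms $R(X,Y)$ with $X,Y\in\frg_1$ kill $\frg_0$ and act on the one-dimensional $\bbH$-space $\frg_1$ by quaternion multiplication, then exponentiate), but the decisive step --- showing that some curvature quaternion is \emph{not} real --- is never carried out. You defer it to an explicit $\frsu(3)$ matrix computation that you do not perform, and you yourself note that if that computation returned only real values the proposition would fail. As written, this is a plan rather than a proof: the one claim that actually needs an argument is exactly the one left open.

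The missing idea is that Lemma~\ref{lem_1}(1) is not merely a consistency check on a computation --- it \emph{is} the proof, and it eliminates the need for any matrix work. Set $Z_1 = R(W,IW)W$, $Z_2 = R(W,JW)W$, $Z_3 = R(W,KW)W$. Since evaluation at $W$ identifies $\bbH$-linear endomorphisms of $\frg_1$ with $\frg_1$ itself, the real span of the curvature quaternions you are after has the same dimension as the real span of $Z_1,Z_2,Z_3$. If that span were at most one-dimensional, say $Z_i=\alpha_i Z_0$ with $\alpha_i\in\bbR$, then Lemma~\ref{lem_1}(1) applied with $X=W$ gives $(\alpha_1+\alpha_3 J-\alpha_2 K)Z_0=0$; a nonzero quaternion acts invertibly on $\frg_1\simeq\bbH$, so either $Z_0=0$ or $\alpha_1=\alpha_2=\alpha_3=0$, and in both cases $Z_1=Z_2=Z_3=0$. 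By $\bbH$-linearity and $\SU(2)$-invariance of $R$, together with the already established vanishing of $R$ whenever an argument lies in $\frg_0$, this would force the whole curvature to vanish, contradicting the non-flatness of the Obata connection. Hence the span is at least two-dimensional, so it necessarily contains a non-real quaternion, and exponentiating the corresponding holonomy-algebra element yields the desired holonomy element. In short, the scenario you flagged as the main obstacle (all curvature landing in the real scalars) is excluded by pure algebra; no commutator computation in $\frsu(3)$ is required.
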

\begin{proof}
By Ambrose-Singer theorem (see e.g. \cite{Bes}) the Lie algebra
of the holonomy group contains all the endomorphisms $R(X,Y)$.
If $X, Y \in \frg_1$, then the endomorphism $R(X,Y)$ acts trivially on
$\frg_0$ and preserves $\frg_1$. Recall that $\frg_1$ is one-dimensional
over $\bbH$ and is generated by $W$. Put $Z_1 = R(W,IW)W$, 
$Z_2 = R(W,JW)W$, $Z_3 = R(W,KW)W$. 
It follows from the first part of Lemma \ref{lem_1} that the subspace
generated by $Z_1$, $Z_2$ and $Z_3$ is at least two-dimensional.
Indeed, otherwise we would have $Z_i = \alpha_i Z_0$, $i = 1,2,3$, for
some $\alpha_i \in \bbR$ and $Z_0 \in \frg_1$. Then by Lemma \ref{lem_1},
$(\alpha_1 + \alpha_3 J - \alpha_2 K)Z_0 = 0$, and this would imply $Z_i = 0$
meaning that the connection is flat, which is not true.
Thus the subalgebra generated by the endomorphisms $R(X,Y)$ with $X, Y\in \frg_1$
is at least two-dimensional. The claim of the proposition follows.
\end{proof}

The proof of the main theorem will be based on the following. 

\begin{prop}\label{prop_irred}
The holonomy of the Obata connection on $\SU(3)$ is irreducible. 
\end{prop}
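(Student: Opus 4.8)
The plan is to prove that the (restricted) holonomy representation on $\frg = T_eG$ is irreducible by working with the infinitesimal holonomy algebra at the identity. Since the Obata connection on $\SU(3)$ is left-invariant and given by the algebraic formula (\ref{eqn_Obata2}), it is real-analytic, so the Lie algebra $\mathfrak{hol}$ of the holonomy group coincides with the span of the curvature endomorphisms $R(X,Y)$ together with all their iterated covariant derivatives $(\ob^k R)(X,Y)$, evaluated at $e$ on left-invariant fields. Moreover $\mathfrak{hol}\subset\emrp_\bbH(\frg)$, because the holonomy group lies in $\GL(2,\bbH)$ and therefore commutes with $I,J,K$. Recall from the discussion preceding the proposition that every $R(X,Y)$ annihilates $\frg_0$ and preserves $\frg_1$, with $R(\frg_1,\frg_1)|_{\frg_1}$ acting by right multiplication by a quaternion, these quaternions spanning a subspace of real dimension at least two (Proposition \ref{prop_curv}). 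Thus the curvature alone preserves the splitting $\frg=\frg_0\oplus\frg_1$, and the whole point is that its covariant derivatives do not.

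The key step is to differentiate the identity $R(X,Y)\EE=0$, valid for all $X,Y$ by Proposition \ref{prop_Euler}(3). Viewing $\rho(X,Y):=R(X,Y)\EE$ as a $TG$-valued $2$-form that vanishes identically, the Leibniz rule together with $\ob_Z\EE=Z$ (Proposition \ref{prop_Euler}(2)) gives $(\ob_Z R)(X,Y)\EE = -R(X,Y)Z$ for all $X,Y,Z$. Choosing $X,Y,Z\in\frg_1$ with $R(X,Y)Z\neq 0$ (possible since $R$ is nonzero on $\frg_1$), the endomorphism $A:=(\ob_Z R)(X,Y)$ lies in $\mathfrak{hol}$ and satisfies $A\EE = -R(X,Y)Z\in\frg_1\setminus\{0\}$. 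By $\bbH$-linearity $A$ maps the line $\frg_0=\bbH\EE$ into $\frg_1$, so $\frg_0$ is not $\mathfrak{hol}$-invariant; in particular the splitting preserved by the curvature is destroyed in the full holonomy algebra.

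Next I would exploit $\bbH$-linearity to reduce the search for invariant subspaces. If $V\subset\frg$ is $\mathfrak{hol}$-invariant then so are $IV,JV,KV$, hence $V\cap IV\cap JV\cap KV$ and $V+IV+JV+KV$ are invariant $\bbH$-submodules of $\frg\cong\bbH^2$. The proper nonzero $\bbH$-submodules are the quaternionic lines; writing a line transverse to $\frg_0$ as the graph of an $\bbH$-linear map $T\colon\frg_1\to\frg_0$ and applying a suitable curvature endomorphism $R(X,Y)$, which kills $\frg_0$ and acts invertibly on $\frg_1$, forces $T=0$, i.e. the only candidate graph-line is $\frg_1$ itself; the line $\frg_0$ is already excluded by $A$. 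To discard $\frg_1$ as well I would produce an element of $\mathfrak{hol}$ with a nonzero $\frg_1\to\frg_0$ component, obtained by differentiating the relations $R(U,\cdot)=0$ for $U\in\frg_0$ (equivalently, by a further application of the identity above), which shows that no quaternionic line is invariant.

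The main obstacle is to upgrade this to genuine real irreducibility: a priori $\mathfrak{hol}$ could preserve a real subspace $V$ with $V\cap IV\cap JV\cap KV=0$ and $V+IV+JV+KV=\frg$ (for instance a complex subspace for one of the structures), which the $\bbH$-submodule analysis alone does not rule out. To close this gap I would show that $\mathfrak{hol}$ is large enough by computing sufficiently many covariant derivatives of $R$ explicitly from the structure constants of $\frsu(3)$: concretely, one wants endomorphisms realizing an $\bbH$-isomorphism $\frg_0\to\frg_1$ (and its partner $\frg_1\to\frg_0$) together with the right-multiplication action generating all of $\frsp(1)$ on each summand. Once such a generating set is in hand, a short linear-algebra argument, using that $\mathfrak{hol}$ commutes with $I,J,K$ and interchanges the two summands, forces any invariant real subspace to be $0$ or $\frg$. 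I expect the explicit derivative computation in $\frsu(3)$ to be the technically heaviest part, while the conceptual core is the single identity $(\ob_Z R)(X,Y)\EE=-R(X,Y)Z$ that breaks the curvature's invariant splitting.
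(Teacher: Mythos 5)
Your key identity $(\ob_Z R)(X,Y)\EE=-R(X,Y)Z$ is correct, the endomorphism $A=(\ob_Z R)(X,Y)$ does lie in the holonomy algebra (the infinitesimal-holonomy framework is legitimate here, since the left-invariant connection is real-analytic), and it genuinely rules out $\frg_0$ as an invariant subspace; your reduction of invariant $\bbH$-submodules to the two lines $\frg_0,\frg_1$ is also sound. The first real gap is your proposed exclusion of $\frg_1$. Differentiating the relations $R(U,\cdot)=0$ for $U\in\frg_0$ produces nothing new: the distribution $\frg_0$ is spanned by the global vector fields $\EE, I\EE, J\EE, K\EE$, and for these the same Leibniz computation that gave your key identity yields $(\ob_Z R)(\EE,X)=-R(\ob_Z\EE,X)=R(X,Z)$, and likewise $(\ob_Z R)(I\EE,X)=R(X,IZ)$, etc. These are again curvature endomorphisms, which annihilate $\frg_0$ and preserve $\frg_1$, hence have identically zero $\frg_1\to\frg_0$ block --- exactly the component you need. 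The only genuinely new first-order elements are $(\ob_Z R)(X,Y)$ with $X,Y\in\frg_1$, and the identities at your disposal pin down only their restriction to $\frg_0$ (equivalently, their value on $\EE$, by $\bbH$-linearity); their action on $\frg_1$ is invisible to this formalism, so an element with nonzero $\frg_1\to\frg_0$ component can only come from an explicit structure-constant computation that you have not performed.

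The second gap is the one you flag yourself: invariant real subspaces $V$ with $V\cap IV\cap JV\cap KV=0$ and $V+IV+JV+KV=\frg$, e.g.\ $I$-complex $2$-planes. This is not a loose end but the crux of the proposition, and ``compute sufficiently many covariant derivatives until a generating set appears'' is not a proof: there is no a priori bound guaranteeing that first-order (or any fixed finite order) derivatives of $R$ contain the generators you list. It is worth seeing how the paper closes exactly this case without computing $\ob R$ at all: if a holonomy-invariant subbundle $B$ is not left-invariant, then every left translate $L_g^*B$ is again holonomy-invariant; from this family one extracts a triple of pairwise complementary $4$-dimensional invariant subbundles, whose projections generate an algebra isomorphic to $\mathrm{Mat}_2(\bbR)$ (Lemma \ref{lem_2}) that the holonomy must centralize, so every holonomy operator has at most two distinct eigenvalues --- contradicting Proposition \ref{prop_curv}, which exhibits a holonomy element with the three distinct eigenvalues $1,\lambda,\bar\lambda$. (Left-invariant subbundles are handled separately by an Euler-field argument, which is the counterpart of your quaternionic-line analysis.) As written, your argument establishes only that no quaternionic line other than possibly $\frg_1$ is invariant; both remaining steps --- excluding $\frg_1$ and excluding non-quaternionic subspaces --- are open, and the method you propose for the first of them demonstrably fails.
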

\begin{proof}
The proof will consist of two parts. First, we will show that there
exist no left-invariant subbundles of $TG$ that are preserved by the holonomy.
Second, we will prove that there exist no holonomy-invariant subbundles at all.

Suppose that $\frh \subset \frg$ is a
subspace corresponding to a left-invariant subbundle preserved by the holonomy.
The left-invariance implies $\ob_X Y \in \frh$ for all $X \in \frg$ and $Y \in \frh$.
Let $V \in \frh$ and $V = V_0 + V_1$ where $V_0 \in \frg_0$, $V_1 \in \frg_1$.
Then 
$$
	\ob_\EE V = \ob_V \EE + [\EE, V] = V + [\EE, V_1],
$$
because $\EE$ lies in the center of $\frg_0$. We conclude that 
$[\EE, V_1] \in \frh$. Note that under identification (\ref{eqn_matr}) of $\frg$
with skew-Hermitian matrices, $\EE \in \frg$ corresponds to a diagonal
matrix with $D = -(b/2) Id$. It is easy to check that $(\mathrm{ad}_\EE)^2$ acts on $\frg_1$ by
real scalar multiplication, so we have $V_1 \in \frh$ and consequently $V_0 \in \frh$.
If there exists some $V \in \frh$ with $V_0 \neq 0$, it follows
that the Euler vector field $\EE$ lies in $\frh$ and this implies
$\frh = \frg$. Otherwise $\frh \subset \frg_1$. But this can happen
only if $\frh = 0$: it was remarked above that $\frg_1$ is $\bbH$-spanned
by $W$ with $\ob_W W \neq 0$ and it follows from $\bbH$-linearity of $\ob$
that $\ob_W V \neq 0$ and lies in $\frg_0$ for non-zero $V \in \frg_1$.

Now, we proceed to the second part of the proof.
Let $L_g\colon G \to G$ denote the left translation $h\mapsto gh$.
Suppose that there exists some (not left-invariant) proper subbundle $B$
preserved by the holonomy. Then for any $g\in G$ the subbundle $L^*_gB$
is also preserved by the holonomy, thus there exists a continuous family of
holonomy-invariant subbundles. We claim that it is possible to find a
holonomy-invariant subbundle $B$ with the following properties:
\begin{enumerate}
\item
	$\dim_\bbR B = 4$,
\item
	$B$ is invariant with respect to some of the complex structures,
\item
	$\dim_\bbR (B \cap L^*_g B)$ is either 0 or 4 for all $g\in G$.
\end{enumerate}
We will first find a subbundle that possesses the first two properties.
Consider holonomy-invariant subbundle $B$ of a minimal
possible dimension. Then $\dim_\bbR B$ must
be less or equal to 4, otherwise we could replace $B$ with $B \cap L^*_gB$
which is a proper subbundle of $B$ for some $g\in G$.
Next, we consider the four possibilities.
If $\dim_\bbR B = 1$, we can take the $\bbH$-span of $B$ and obtain
$\bbH$-invariant subbundle of real dimension 4.
If $\dim_\bbR B = 2$, we can take $B+IB$.
If $\dim_\bbR B = 3$, we can take $B+IB$ and obtain a subbundle of complex
dimension 2 or 3. In the former case we are done, and in the latter case,
we can intersect the subbundle with its left translation and decrease its dimension.
Consider the case when $\dim_\bbR B = 4$. Then $B$ is either $I$-invariant
or $B \cap IB = 0$. In the latter case $B\oplus IB$ is a complex
representation of the holonomy group. Suppose that it is irreducible.
Consider the operator $\mathcal{C}$ that fixes $B$ and multiplies $IB$ by $-1$.
This operator is {$I$-antilinear} and is preserved by the holonomy, and so is
the complex structure~$J$. The composition $J\mathcal{C}$ is
$I$-linear and thus by Schur's lemma must be equal to $\lambda Id$ with 
$\lambda \in \bbC$. But since $\mathcal{C}^2 = Id$, we have $\lambda \mathcal{C} = J$
and $-Id = J^2 = \lambda \mathcal{C} \lambda \mathcal{C} = |\lambda|^2 Id$ which
is impossible.
Consequently, the representation $B\oplus IB$ is reducible. We can replace $B$
with a proper $I$-invariant subbundle of $TG$ preserved by the holonomy
and of minimal dimension. The real dimension of $B$
must be less or equal to 4, otherwise we could replace $B$ with $B \cap L^*_gB$
for some $g\in G$. Since we are considering the case when
the minimal dimension of such a subbundle is greater or equal to 4,
$\dim_\bbR B = 4$ and we obtain a subbundle satisfying the first two
requirements. 

Now, if the subbundle $B$ does not possess the third property, then
there exists such $g\in G$ that $\dim_\bbR (B \cap L^*_g B) = 2$.
We can then replace $B$ with the $\bbH$-span of $B \cap L^*_g B$.
Since $B \cap L^*_g B$ is $I$-invariant, its $\bbH$-span will
have real dimension 4 and will satisfy all the three requirements.
 
Let the subbundle $B$ possess all the three properties listed above.
Since it can not be left-invariant, there exist $g_1, g_2 \in G$ such
that $B$, $L^*_{g_1} B$ and $L^*_{g_2} B$ form a triple of pairwise
complementary subbundles. Now we are going to use the following
observation.

\begin{lem}\label{lem_2}
Let $V$ be a $2n$-dimensional vector space, and $V_1$, $V_2$, $V_3$ three
pairwise complementary $n$-dimensional subspaces. Denote by $P_{ij}$ the
projection operator onto $V_i$ along $V_j$. Then the algebra generated by $P_{ij}$
is isomorphic to $\mathrm{Mat_2(\bbR)}$, the algebra of $2\times 2$ matrices.
\end{lem}
\begin{proof}
Consider the operator $A = P_{12}P_{31}$. It maps $V_2$ isomorphically onto $V_1$;
if we consider the decomposition $V=V_1\oplus V_2$ and identify $V_1$ and $V_2$
via $A$ then the operators $P_{12}$, $P_{21}$, $P_{12}P_{31}$ and $P_{21}P_{32}$ will
have the block matrix forms 
$\bigl(\begin{smallmatrix}
1 & 0\\0 & 0
\end{smallmatrix}\bigr)$,
$\bigl(\begin{smallmatrix}
0 & 0\\0 & 1
\end{smallmatrix}\bigr)$,
$\bigl(\begin{smallmatrix}
0 & 1\\0 & 0
\end{smallmatrix}\bigr)$,
$\bigl(\begin{smallmatrix}
0 & 0\\1 & 0
\end{smallmatrix}\bigr)$
respectively. 
\end{proof}

The holonomy group preserves a triple of pairwise complementary subbundles.
These subbundles are invariant with respect to some of the complex structures.
We will fix this complex structure and consider $TG$ as a complex vector bundle. 
The holonomy group must centralize the algebra generated by projections.
Therefore we can choose an isomorphism of vector spaces 
$\frg \simeq \bbC^2 \otimes_\bbC \bbC^2$ with the holonomy acting trivially on
the first factor and non-trivially on the second.
In particular, each operator in the holonomy group must have at most two
distinct eigenvalues. But Proposition \ref{prop_curv} implies that the holonomy
group contains an operator with three distinct eigenvalues 
(one real, equal to 1, and two complex-conjugate).
This contradiction ends the proof of irreducibility of the holonomy. 
\end{proof}

Now we are ready to prove the main theorem.

\begin{thm}
The holonomy group of the Obata connection on $\SU(3)$ with the
homogeneous hypercomplex structure is $\GL(2,\bbH)$.
\end{thm}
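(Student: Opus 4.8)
The plan is to combine the irreducibility already established in Proposition~\ref{prop_irred} with the classification of irreducible holonomy groups, and then to eliminate the two remaining candidates by means of the Euler vector field. Since $\dim_\bbR \SU(3) = 8$, i.e.\ $n=2$, the holonomy of the Obata connection is a subgroup of $\GL(2,\bbH)$; moreover $\SU(3)$ is simply connected, so its restricted and full holonomy groups coincide. By Proposition~\ref{prop_irred} this subgroup acts irreducibly on $\frg\cong\bbR^8$. According to the Merkulov--Schwachh\"ofer classification the only irreducible subgroups of $\GL(n,\bbH)$ that can arise as holonomy groups of torsion-free connections are $\Sp(n)$, $\SL(n,\bbH)$ and $\GL(n,\bbH)$ itself. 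Hence it suffices to rule out $\Sp(2)$ and $\SL(2,\bbH)$.

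To exclude $\Sp(2)$, I would use that this group is precisely the stabilizer of a positive-definite quaternionic Hermitian form on $\bbH^2$, so that a holonomy contained in $\Sp(2)$ is equivalent (by the holonomy principle) to the existence of a parallel quaternionic Hermitian metric, a nowhere-vanishing tensor of type $(0,2)$. Proposition~\ref{prop_Euler}(2) furnishes a vector field with $\ob\EE = Id$, and Remark~\ref{rem_euler} then asserts that on the compact manifold $\SU(3)$ such a torsion-free connection cannot preserve any nonzero tensor of type $(k,m)$ with $k\neq m$. Since $(0,2)$ is of this kind, $\Sp(2)$ is impossible.

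To exclude $\SL(2,\bbH)$, I would use that $\SL(n,\bbH)$ is exactly the subgroup of $\GL(n,\bbH)$ acting with determinant $1$ under the embedding $\GL(n,\bbH)\hookrightarrow\GL(2n,\bbC)$, and therefore stabilizes a complex volume form on $\bbC^{2n}$. A holonomy contained in $\SL(2,\bbH)$ would thus produce a parallel, nowhere-vanishing $(2n,0)$-form with respect to $I$ (here $2n=4$). Its real and imaginary parts are parallel real $4$-forms, that is, tensors of type $(0,4)$, at least one of which is nonzero at some point. Once more Remark~\ref{rem_euler} forbids this on the compact group $\SU(3)$, so $\SL(2,\bbH)$ is excluded as well, and the holonomy must be all of $\GL(2,\bbH)$.

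The step requiring the most care -- the main obstacle -- is the exclusion of $\SL(2,\bbH)$, and it is worth isolating why. One might hope to rule it out directly from the holonomy element produced in Proposition~\ref{prop_curv}, which acts as the identity on $\frg_0$ and by a non-real quaternion $q$ on $\frg_1$; however its determinant under $\GL(2,\bbH)\hookrightarrow\GL(4,\bbC)$ equals the quaternionic norm $|q|^2$, and a non-real quaternion may perfectly well have unit norm, so this element need not leave $\SL(2,\bbH)$. The correct tool is instead the parallel complex volume form together with the Euler field. Thus the delicate point is precisely the dictionary identifying $\Sp(2)$ and $\SL(2,\bbH)$ with the stabilizers of a quaternionic Hermitian metric and of a complex volume form, respectively; once this is in place, Remark~\ref{rem_euler} disposes of both candidates and the theorem follows.
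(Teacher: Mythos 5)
Your overall strategy coincides with the paper's: irreducibility (Proposition~\ref{prop_irred}) plus the Merkulov--Schwachh\"ofer classification, followed by elimination of the remaining candidates using the Euler vector field and Remark~\ref{rem_euler}; in particular your exclusion of $\SL(2,\bbH)$ via a parallel holomorphic volume form is exactly the paper's argument. However, there is a genuine gap at the classification step. The claim that ``according to Merkulov--Schwachh\"ofer the only irreducible subgroups of $\GL(n,\bbH)$ that can arise as holonomy groups of torsion-free connections are $\Sp(n)$, $\SL(n,\bbH)$ and $\GL(n,\bbH)$'' is not something one can read off from \cite{MS} as a single statement: their result is a list of groups together with representations, and one must check, entry by entry, which of the groups acting on $\bbR^8$ are (up to conjugation) contained in $\GL(2,\bbH)$. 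That check is where the real work of the paper's proof lies. In particular, Table~3 of \cite{MS} contains the exotic entries $\SL(2,\bbC)$ and $\bbC^*\cdot\SL(2,\bbC)$ acting on $S^3\bbC^2\simeq\bbR^8$, and it is not obvious that these fail to preserve a quaternionic structure; the paper devotes a separate argument to this (a weight-space decomposition together with Schur's lemma shows that no non-scalar $\bbR$-linear operator commutes with the action). Your proposal silently assumes this verification. In addition, your quoted list omits the indefinite groups $\Sp(p,q)$, $p+q=2$, which do occur in the classification; this happens to be harmless, since $\Sp(1,1)$ preserves a non-degenerate $(0,2)$-tensor and Remark~\ref{rem_euler} excludes it exactly as it excludes $\Sp(2)$, but it indicates that the candidate list was asserted rather than derived.

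Apart from this, your eliminations are correct and match the paper's: the connection preserves no metric (ruling out all $\Sp(p,q)$) and no holomorphic volume form (ruling out $\SL(2,\bbH)$), both by Remark~\ref{rem_euler} applied to the field $\EE$ with $\ob\EE = Id$ on the compact manifold $\SU(3)$. Two of your side observations are worth keeping: that simple connectedness of $\SU(3)$ identifies the full holonomy with the restricted (hence connected) holonomy, which is implicitly needed to invoke the classification; and that the holonomy element of Proposition~\ref{prop_curv} cannot by itself rule out $\SL(2,\bbH)$, since a non-real quaternion of unit norm has complex determinant one. To close the gap, you would need to either supply the case-by-case containment check for the rank-eight entries of the tables (as the paper does), or locate a precise reference asserting the restricted candidate list you use.
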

\begin{proof}
By Proposition \ref{prop_irred} the holonomy is irreducible.
The statement of the theorem follows from the
classification of irreducible holonomies from \cite{MS}. Indeed, 
the Obata connection on $\SU(3)$ does not preserve any metric (see Remark
{\ref{rem_euler}}). Here is the list of non-metric holonomy groups
with representation space $\bbR^8$ ($\grT{\bbF}$ denotes 
any connected Lie subgroup of $\bbF^*$): 

\begin{center}
\begin{tabular}{|c|c|}
\hline
From Table 2 in \cite{MS} & From Table 3 in \cite{MS} \\
\hline 
$\grT{\bbR}\cdot \SL(8, \bbR)$ & $\SL(2,\bbC)$ acting on $S^3\bbC^2$\\
$\grT{\bbC}\cdot \SL(4, \bbC)$ & $\bbC^*\cdot \SL(2,\bbC)$  acting on $S^3\bbC^2$\\
$\grT{\bbR}\cdot \SL(2, \bbH)$ & $\bbC^*\cdot \Sp(2,\bbC)$\\
$\Sp(4, \bbR)$ & $\SL(2,\bbR)\cdot \SO(p,q)$, $p+q = 4$\\
$\Sp(2, \bbC)$ & $\Sp(1)\cdot \SO(2,\bbH)$\\
$\bbR^*\cdot \SO(p,q)$, $p+q=8$ & \\
$\grT{\bbC}\cdot \SO(4, \bbC)$ & \\
$\grT{\bbR}\cdot \SL(m, \bbR)\cdot \SL(n, \bbR)$, $mn=8$ & \\
$\grT{\bbR}\cdot \SL(m, \bbH)\cdot \SL(n, \bbH)$, $mn=2$ & \\
\hline
\end{tabular}
\end{center}

The most of the entries in the list are obviously not contained
in $\GL(2,\bbH)$ because of dimension reasons or because they
do not preserve any complex structure. Note that the action
of $\SL(2,\bbC)$ on $S^3\bbC^2$ does not preserve quaternionic
structure, because it does not commute with any non-scalar $\bbR$-linear
operator.
Indeed, let $A\in \emrp(\bbR^8)$ be a real endomorphism commuting with the
action of $SL(2,\bbC)$ on $S^3\bbC^2 \simeq \bbR^8$. Consider the weight
decomposition $S^3\bbC^2 = \bigoplus_{\lambda} V_\lambda$ where $V_\lambda$ is
an eigenspace of 
$H = \bigl(\begin{smallmatrix}
1 & 0\\0 & -1
\end{smallmatrix}\bigr) \in \frsl(2,\bbC)$ with eigenvalue $\lambda$,
and the sum runs over $\lambda = 3$, $1$, $-1$, $-3$. Since the
eigenvalues are real, $A$ must preserve the eigenspaces $V_\lambda$.
Moreover, $A$ has to be $\bbC$-linear because it commutes with 
$\ii H \in \frsl(2,\bbC)$ which has the same eigenspaces $V_\lambda$ with 
eigenvalues $\ii \lambda$.
By Schur's lemma $A$ must be equal to a scalar operator.
Thus, the groups $\SL(2,\bbC)$ and $\bbC^*\cdot \SL(2,\bbC)$
can not occur as holonomy groups of the Obata
connection.

The list contains only one proper subgroup of $\GL(2,\bbH)$, namely
$\SL(2,\bbH)$. But if the holonomy was $\SL(2, \bbH)$, the Obata connection
would preserve a holomorphic volume form, and this is impossible
(see Remark {\ref{rem_euler}}). Thus, the holonomy group must coincide
with $\GL(2,\bbH)$.
\end{proof}

\hfill

\hfill

\small{\noindent{\sc Andrey Soldatenkov \\
Laboratory of Algebraic Geometry, HSE, \\ 
7 Vavilova Str., Moscow, Russia, 117312.}}


\begin{thebibliography}{HKLR}

\bibitem[BDV]{BDV} M.L. Barberis, I. Dotti, M. Verbitsky, {\em Canonical bundles
	of complex nilmanifolds, with applications to hypercomplex geometry},
	Math. Res. Lett. {\bf 16} (2009),  331--347.
  
\bibitem[Bes]{Bes} A. Besse, {\em Einstein manifolds},
	Springer-Verlag, Berlin-Heildelberg-New York-Tokyo, 1987.
	
\bibitem[Bo]{Bo} C.P. Boyer, {\em A note on hyper-Hermitian four-manifolds},
	Proc. Amer. Math. Soc. 102 (1988), no. 1, 157--164.
	
\bibitem[Ga]{Ga} P. Gauduchon, {\em Hermitian connections and Dirac operators},
	Bollettino U.M.I., {\bf 11B} (1997), 257--288. 

\bibitem[GP]{GP} G. Grantcharov, Y.S. Poon, {\em Geometry of hyper-K\"ahler
	connections with torsion}, Comm. Math. Phys. {\bf 213} (2000),
	no. 1, 19--37, math.DG/9908015.

\bibitem[J]{J} D. Joyce, {\em Compact hypercomplex and
	quaternionic manifolds}, J. Differential Geom. {\bf 35} (1992), 743--761.
	
\bibitem[MS]{MS} S. Merkulov, L. Schwachh\"ofer, {\em  Classification of
	irreducible holonomies of torsion-free affine connections},
	math.DG/9907206, Ann. of Math. (2) 150 (1999), no. 1, 77--149.
	
\bibitem[Ob]{Ob} M. Obata, {\em Affine connections on manifolds
	with almost complex, quaternionic or Hermitian structure}, Jap. J.
	Math., 26 (1955), 43--79.
	
\bibitem[PPS]{PPS} H. Pedersen, Y.S. Poon, A.F. Swann. {\em Hypercomplex
	structures associated to quaternionic manifolds}, Differential
	Geom. Appl. {\bf 9}(3) (1998), 273--292.
	
\bibitem[SSTV]{SSTV} Ph. Spindel, A. Sevrin, W. Troost, A. Van Proeyen,
	{\em Extended supersymmetric $\sigma$-models on group manifolds},
	Nucl. Phys. B308 (1988), 662--698.

\end{thebibliography}
\end{document}